\newtheorem{theorem}{Theorem}[section]
\newtheorem{corollary}[theorem] {Corollary}
\newtheorem{definition}[theorem]{Definition}
\newtheorem{proposition}[theorem]{Proposition}
\newtheorem{question}[theorem]{Question}
\title{This is the title}
\begin{document}
	\vspace{0.9cm}	
\hrule\hrule\hrule\hrule\hrule
\vspace{0.3cm}	
\begin{center}
{\bf{FUNCTIONAL DONOHO-STARK APPROXIMATE SUPPORT UNCERTAINTY PRINCIPLE}}\\
\vspace{0.3cm}
\hrule\hrule\hrule\hrule\hrule
\vspace{0.3cm}
\textbf{K. MAHESH KRISHNA}\\
Post Doctoral Fellow \\
Statistics and Mathematics Unit\\
Indian Statistical Institute, Bangalore Centre\\
Karnataka 560 059, India\\
Email: kmaheshak@gmail.com\\

Date: \today
\end{center}

\hrule\hrule
\vspace{0.5cm}
%--------------------------------------
\textbf{Abstract}: Let $(\{f_j\}_{j=1}^n, \{\tau_j\}_{j=1}^n)$	and $(\{g_k\}_{k=1}^n, \{\omega_k\}_{k=1}^n)$ be two p-orthonormal bases for a finite dimensional Banach space  $\mathcal{X}$.  If 	$ x \in \mathcal{X}\setminus\{0\}$ is such that $\theta_fx$ is $\varepsilon$-supported on  $M\subseteq \{1,\dots, n\}$ w.r.t. p-norm and $\theta_gx$  is $\delta$-supported on  $N\subseteq \{1,\dots, n\}$ w.r.t. p-norm, then we show that 
\begin{align}\label{ME}
	&o(M)^\frac{1}{p}o(N)^\frac{1}{q}\geq  \frac{1}{\displaystyle \max_{1\leq j,k\leq n}|f_j(\omega_k) |}\max \{1-\varepsilon-\delta, 0\},\\
	&o(M)^\frac{1}{q}o(N)^\frac{1}{p}\geq \frac{1}{\displaystyle \max_{1\leq j,k\leq n}|g_k(\tau_j) |}\max \{1-\varepsilon-\delta, 0\},\label{ME2}
\end{align}
 where 
 \begin{align*}
 	\theta_f: \mathcal{X} \ni x \mapsto (f_j(x) )_{j=1}^n \in \ell^p([n]); \quad \theta_g: \mathcal{X} \ni x \mapsto (g_k(x) )_{k=1}^n \in \ell^p([n])	
 \end{align*}
 and $q$ is the conjugate index of $p$. We call Inequalities (\ref{ME}) and (\ref{ME2}) as \textbf{Functional Donoho-Stark  Approximate Support Uncertainty Principle}. Inequalities (\ref{ME}) and  (\ref{ME2}) improve the finite approximate support uncertainty principle obtained by  Donoho and Stark  \textit{[SIAM J. Appl. Math., 1989]}.

\textbf{Keywords}:   Uncertainty Principle, Orthonormal Basis,  Hilbert space, Banach space.

\textbf{Mathematics Subject Classification (2020)}: 42C15, 46B03, 46B04.\\

\hrule

%Functional Donoho-Stark Approximate  Support Uncertainty Principle
\tableofcontents
\hrule
\section{Introduction}
Let $0\leq \varepsilon <1$. Recall that  a function $f \in \mathcal{L}^2	 (\mathbb{R}^d)$   is said to be \textbf{$\varepsilon$-supported on a measurable subset $E\subseteq \mathbb{R}^d$} (also known as \textbf{$\varepsilon$-approximately supported} as well as  \textbf{$\varepsilon$-essentially supported}) \cite{DONOHOSTARK, WILLIAMS} if 	
	\begin{align*}
	\left(\int\limits_{E^c}	|f(x)|^2\,dx \right)^\frac{1}{2}\leq \varepsilon \left(\int\limits_{\mathbb{R}^d}	|f(x)|^2\,dx\right)^\frac{1}{2}.
	\end{align*}
Let $d \in \mathbb{N}$ and  $~\widehat{}:\mathcal{L}^2 (\mathbb{R}^d) \to \mathcal{L}^2 (\mathbb{R}^d)$ be the unitary Fourier transform obtained by extending uniquely the bounded linear operator 
\begin{align*}
	\widehat{}:\mathcal{L}^1 (\mathbb{R}^d)\cap  \mathcal{L}^2	 (\mathbb{R}^d) \ni f \mapsto \widehat{f} \in  C_0(\mathbb{R}^d); \quad \widehat{f}: \mathbb{R}^d \ni \xi \mapsto \widehat{f}(\xi)\coloneqq \int\limits_{\mathbb{R}^d}	f(x)e^{-2\pi i  \langle x, \xi \rangle}\,dx\ \in \mathbb{C}.
\end{align*}
In 1989, Donoho and Stark derived the following uncertainty principle on approximate supports of function and its Fourier transform \cite{DONOHOSTARK}.
\begin{theorem}\cite{DONOHOSTARK}\label{DSA} (\textbf{Donoho-Stark Approximate Support  Uncertainty Principle})  If $f \in \mathcal{L}^2 (\mathbb{R}^d)\setminus \{0\}$ is $\varepsilon$-supported on a measurable subset $E\subseteq \mathbb{R}^d$ and $\widehat{f}$ is $\delta$-supported on a measurable subset $F\subseteq \mathbb{R}^d$, then 
	\begin{align*}
		m(E)m(F)\geq (1-\varepsilon-\delta)^2.
	\end{align*} 
\end{theorem}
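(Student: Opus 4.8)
My plan is to bound the norm of $Q_FP_Ef$ from below and above, where $P_E$ denotes the \emph{time-limiting} operator $(P_Ef)(x)=\chi_E(x)f(x)$ on $\mathcal{L}^2(\mathbb{R}^d)$ and $Q_F$ denotes the \emph{band-limiting} operator determined by $\widehat{Q_Ff}=\chi_F\widehat{f}$. Both are orthogonal projections, hence $\|P_E\|\le 1$ and $\|Q_F\|\le 1$, and by Plancherel's theorem the two hypotheses are exactly $\|f-P_Ef\|_2\le\varepsilon\|f\|_2$ and $\|f-Q_Ff\|_2\le\delta\|f\|_2$. Since the asserted inequality is trivial when $m(E)=\infty$ or $m(F)=\infty$, I would assume at the outset that both sets have finite measure.

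For the lower bound I would decompose $f-Q_FP_Ef=(f-Q_Ff)+Q_F(f-P_Ef)$, apply the triangle inequality together with $\|Q_F\|\le 1$ to obtain $\|f-Q_FP_Ef\|_2\le(\varepsilon+\delta)\|f\|_2$, and conclude $\|Q_FP_Ef\|_2\ge(1-\varepsilon-\delta)\|f\|_2$. For the upper bound I would represent $Q_FP_E$ as an integral operator: expanding the Fourier transforms shows its kernel is $K(x,y)=\chi_E(y)\int_Fe^{2\pi i\langle x-y,\xi\rangle}\,d\xi$, i.e.\ $\chi_E(y)\,\widehat{\chi_F}(y-x)$, so a Plancherel computation of the Hilbert--Schmidt norm gives $\|Q_FP_E\|_{\mathrm{HS}}^2=\int_E\big(\int_{\mathbb{R}^d}|\widehat{\chi_F}(y-x)|^2\,dx\big)\,dy=m(E)\,m(F)$. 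Hence $\|Q_FP_Ef\|_2\le\|Q_FP_E\|_{\mathrm{HS}}\,\|f\|_2=(m(E)m(F))^{1/2}\|f\|_2$. Combining the two bounds and dividing by $\|f\|_2>0$ gives $1-\varepsilon-\delta\le(m(E)m(F))^{1/2}$; squaring (and noting that the target inequality holds trivially when $1-\varepsilon-\delta<0$) yields $m(E)m(F)\ge(1-\varepsilon-\delta)^2$.

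The step I expect to be the main obstacle is the upper bound: one has to verify that $Q_FP_E$ is genuinely Hilbert--Schmidt (this is precisely where $m(E),m(F)<\infty$ enters), justify the kernel identification together with the Fubini interchange in the norm computation, and then invoke the elementary inequality $\|T\|\le\|T\|_{\mathrm{HS}}$. The lower bound and the reduction to finite-measure sets are routine once Plancherel's theorem and the projection property of $P_E$ and $Q_F$ are in hand.
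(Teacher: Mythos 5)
Your argument is correct in substance, and it is worth noting that the paper itself gives no proof of Theorem \ref{DSA}: the statement is quoted from \cite{DONOHOSTARK}, and the only proof the paper contains is for the functional analogue, Theorem \ref{ODSA}. Your proof is the classical Donoho--Stark argument, and it is structurally identical to what the paper does in that analogue: your composition $Q_FP_E$ plays the role of $V=P_M\theta_f\theta_\omega P_N$, your triangle-inequality estimate $\|f-Q_FP_Ef\|_2\le(\varepsilon+\delta)\|f\|_2$ giving $\|Q_FP_Ef\|_2\ge(1-\varepsilon-\delta)\|f\|_2$ is exactly the step leading to Inequality (\ref{P2}), and your Hilbert--Schmidt bound $\|Q_FP_E\|\le\|Q_FP_E\|_{\mathrm{HS}}=(m(E)m(F))^{1/2}$ is the continuous counterpart of the H\"older/counting estimate (\ref{P1}). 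The kernel identification $K(x,y)=\chi_E(y)\widehat{\chi_F}(y-x)$, the Fubini interchange, and the Plancherel computation are all legitimate once $m(E),m(F)<\infty$; and the reduction to that case is harmless because the hypotheses with $f\neq 0$ and $\varepsilon,\delta<1$ force $m(E),m(F)>0$, so an infinite measure makes the product infinite.

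The one slip is the parenthetical claim that the target inequality ``holds trivially when $1-\varepsilon-\delta<0$''. It does not: in that regime $(1-\varepsilon-\delta)^2>0$, and the inequality as literally stated can actually fail --- a Gaussian carries a fixed positive fraction of its energy, and of its transform's energy, on small symmetric intervals, so it is $\varepsilon$-supported and $\delta$-supported there for suitable $\varepsilon,\delta$ close to $1$ while $m(E)m(F)$ is far below $(1-\varepsilon-\delta)^2$. The theorem must therefore be read with the implicit proviso $\varepsilon+\delta\le 1$ (as in the original source), which is precisely what the present paper makes explicit in Theorem \ref{ODSA} by writing $\max\{1-\varepsilon-\delta,0\}$ on the right-hand side. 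With that proviso, your two-sided bound on $Q_FP_E$ is a complete proof; without it, no proof could exist because the unrestricted statement is false.
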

Ultimate result in \cite{DONOHOSTARK} is the finite dimensional Heisenberg uncertainty principle known today as Donoho-Stark uncertainty principle. It is then natural to seek a finite dimensional version of Theorem \ref{DSA}. For this, first one needs the notion of approximate support in finite dimensions. Donoho and Stark defined this notion as follows. For $h \in \mathbb{C}^d$, let $\|h\|_0$ be the number of nonzero entries in $h$. Let $\hat{}: \mathbb{C}^d \to  \mathbb{C}^d$ be the Fourier transform. Given a subset  $M\subseteq \{1, \dots, n\}$, the number of elements in $M$ is denoted by $o(M)$. 
\begin{definition}\cite{DONOHOSTARK}\label{DSD}
		Let $0\leq \varepsilon <1$. A vector $(a_j)_{j=1}^d\in  \mathbb{C}^d$ is said to be \textbf{$\varepsilon$-supported on a subset $M\subseteq \{1,\dots, d\}$} if 
	\begin{align*}
		\left(\sum_{j\in M^c}|a_j|^2\right)^\frac{1}{2}\leq \varepsilon 	\left(\sum_{j=1}^d|a_j|^2\right)^\frac{1}{2}.
	\end{align*}
\end{definition}
Finite dimensional version of Theorem \ref{DSA} then reads as follows.
\begin{theorem}\cite{DONOHOSTARK} \label{FDSA} (\textbf{Finite Donoho-Stark Approximate Support  Uncertainty Principle}) 
If 	$ h \in \mathbb{C}^d\setminus\{0\}$ is $\varepsilon$-supported on  $M\subseteq \{1,\dots, d\}$ and $ \widehat{h}$ is $\delta$-supported on  $N\subseteq \{1,\dots, d\}$, then 
\begin{align*}
	o(M)o(N)\geq d (1-\varepsilon-\delta)^2.
\end{align*}
\end{theorem}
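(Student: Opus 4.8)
The plan is to recast both hypotheses as a single operator-norm statement about two orthogonal projections — one in ``time'', one in ``frequency'' — and then to exploit the flatness of the finite Fourier matrix, every entry of which has modulus $1/\sqrt{d}$, through a Hilbert--Schmidt estimate. Throughout I normalize so that $\|h\|_2 = 1$; the general case follows by scaling since Definition \ref{DSD} is homogeneous of degree $0$ in $h$. Write $\mathcal{F}$ for the finite Fourier transform, so that $\widehat{h} = \mathcal{F}h$ and $\mathcal{F}$ is unitary. Let $P_M$ denote the coordinate projection given by $(P_M h)_j = h_j$ for $j\in M$ and $(P_M h)_j = 0$ otherwise, and set $Q_N \coloneqq \mathcal{F}^{-1} P_N \mathcal{F}$, the corresponding frequency projection. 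By Definition \ref{DSD}, the assumption that $h$ is $\varepsilon$-supported on $M$ reads $\|(I-P_M)h\|_2 \le \varepsilon$, while the assumption that $\widehat{h}$ is $\delta$-supported on $N$ reads $\|(I-P_N)\widehat{h}\|_2 \le \delta$. Applying the isometry $\mathcal{F}^{-1}$ to the latter (Parseval) turns it into $\|(I-Q_N)h\|_2 \le \delta$, so both error conditions now live on the same side.

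Next I would obtain a lower bound for $\|Q_N P_M h\|_2$. Using the decomposition
\begin{align*}
	h - Q_N P_M h = (I-Q_N)h + Q_N(I-P_M)h
\end{align*}
together with the fact that $Q_N$ is a projection, so $\|Q_N\|_{\mathrm{op}} \le 1$, the triangle inequality yields $\|h - Q_N P_M h\|_2 \le \|(I-Q_N)h\|_2 + \|(I-P_M)h\|_2 \le \delta + \varepsilon$. The reverse triangle inequality then gives
\begin{align*}
	\|Q_N P_M h\|_2 \ge \|h\|_2 - \|h - Q_N P_M h\|_2 \ge 1 - \varepsilon - \delta.
\end{align*}
Since $\|Q_N P_M h\|_2 \le \|Q_N P_M\|_{\mathrm{op}}\,\|h\|_2 = \|Q_N P_M\|_{\mathrm{op}}$, this produces the clean intermediate inequality $\|Q_N P_M\|_{\mathrm{op}} \ge 1 - \varepsilon - \delta$.

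It remains to bound $\|Q_N P_M\|_{\mathrm{op}}$ from above by $\sqrt{o(M)o(N)/d}$. Because $\mathcal{F}^{-1}$ is unitary, $\|Q_N P_M\|_{\mathrm{op}} = \|P_N \mathcal{F} P_M\|_{\mathrm{op}}$, and $P_N \mathcal{F} P_M$ is exactly the submatrix of the Fourier matrix supported on the rows indexed by $N$ and the columns indexed by $M$. Dominating the operator norm by the Hilbert--Schmidt (Frobenius) norm and using that each Fourier entry satisfies $|\mathcal{F}_{kj}|^2 = 1/d$,
\begin{align*}
	\|Q_N P_M\|_{\mathrm{op}} \le \Big(\sum_{k\in N}\sum_{j\in M} \tfrac{1}{d}\Big)^{\frac12} = \sqrt{\frac{o(M)\,o(N)}{d}}.
\end{align*}
Combining the two displays gives $\sqrt{o(M)o(N)/d} \ge 1 - \varepsilon - \delta$. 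If $1-\varepsilon-\delta \le 0$ the conclusion is immediate since the left-hand side is nonnegative; otherwise squaring both sides yields $o(M)o(N) \ge d(1-\varepsilon-\delta)^2$, as claimed.

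The main obstacle is the passage from the two separate hypotheses to a single inequality about one composite operator: the device of composing the time projection $P_M$ with the frequency projection $Q_N$ and absorbing both error terms via $\|Q_N\|_{\mathrm{op}} \le 1$ is what makes the $\varepsilon$- and $\delta$-estimates cooperate. After that, the only arithmetic input is the uniform bound $|\mathcal{F}_{kj}| = 1/\sqrt{d}$, which is precisely the flatness feature of the discrete Fourier transform that forces the factor $d$ in the final bound; the operator-norm-versus-Frobenius-norm comparison is the standard and lossy step, and it is exactly this slack that sharper coherence-type estimates — such as replacing $1/\sqrt d$ by $\max_{j,k}|f_j(\omega_k)|$ — exploit in the refinements \eqref{ME} and \eqref{ME2}.
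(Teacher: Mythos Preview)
Your proof is correct and follows essentially the same strategy as the paper's derivation (via Theorem~\ref{ODSA} specialized to $p=2$ and the standard/Fourier bases, cf.\ Corollary~2.5): both sandwich the operator norm of a composite time--frequency projection between $1-\varepsilon-\delta$ (from the approximate-support hypotheses and the triangle inequality) and $\sqrt{o(M)o(N)/d}$ (from the flatness of the Fourier matrix). The only notable variation is that you obtain the upper bound by dominating the operator norm with the Hilbert--Schmidt norm, whereas the paper computes in coordinates and applies H\"older's inequality; your shortcut is tidy for $p=2$ but does not extend to general $p$, which is precisely where the paper's argument gains the leverage needed for Inequalities~\eqref{ME} and~\eqref{ME2}.
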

In 1990, Smith  \cite{SMITH} generalized Theorem  \ref{FDSA} to Fourier transforms defined on locally compact abelian groups. Recently, Banach space version of finite   Donoho-Stark uncertainty principle has been derived in \cite{KRISHNA3}. Therefore we  seek a Banach space version of Theorem \ref{FDSA}. This we obtain in this paper.

\section{Functional Donoho-Stark Approximate Support Uncertainty Principle}
In the paper,   $\mathbb{K}$ denotes $\mathbb{C}$ or $\mathbb{R}$ and $\mathcal{X}$ denotes a  finite dimensional Banach space over $\mathbb{K}$.  Identity operator on $\mathcal{X}$ is denoted by $I_\mathcal{X}$. Dual of $\mathcal{X}$ is denoted by $\mathcal{X}^*$. Whenever $1<p<\infty$, $q$ denotes the conjugate index of $p$. For $d \in \mathbb{N}$, the standard finite dimensional Banach space $\mathbb{K}^d$ over $\mathbb{K}$ equipped with standard $\|\cdot\|_p$ norm is denoted by $\ell^p([d])$. Canonical basis for $\mathbb{K}^d$ is denoted by $\{e_j\}_{j=1}^d$ and $\{\zeta_j\}_{j=1}^d$ be the coordinate functionals associated with $\{e_j\}_{j=1}^d$. 
\begin{definition}\label{PONB}\cite{KRISHNA}
	Let $\mathcal{X}$  be a  finite dimensional Banach space over $\mathbb{K}$.   Let $\{\tau_j\}_{j=1}^n$ be a basis for   $\mathcal{X}$ and 	let $\{f_j\}_{j=1}^n$ be the coordinate functionals associated with $\{\tau_j\}_{j=1}^n$. The pair $(\{f_j\}_{j=1}^n, \{\tau_j\}_{j=1}^n)$ is said to be a \textbf{p-orthonormal basis} ($1<p <\infty$) for $\mathcal{X}$ if  the following conditions hold.
	\begin{enumerate}[\upshape(i)]
		\item $\|f_j\|=\|\tau_j\|=1$ for all $1\leq j\leq n$.
		\item For every $(a_j)_{j=1}^n \in \mathbb{K}^n$, 
		\begin{align*}
		\left\|\sum_{j=1}^na_j\tau_j \right\|=\left(\sum_{j=1}^n|a_j|^p\right)^\frac{1}{p}.
		\end{align*}
		\end{enumerate}
\end{definition}
Given a p-orthonormal basis $(\{f_j\}_{j=1}^n, \{\tau_j\}_{j=1}^n)$ for $\mathcal{X}$, we get the following two invertible isometries:
\begin{align*}
	\theta_f: \mathcal{X} \ni x \mapsto (f_j(x))_{j=1}^n \in \ell^p([n]), \quad 
	\theta_\tau :\ell^p([n])\ni (a_j)_{j=1}^n \mapsto \sum_{j=1}^{n}a_j \tau_j \in \mathcal{X}.
\end{align*}
Then we have the following proposition.
\begin{proposition}
Let $(\{f_j\}_{j=1}^n, \{\tau_j\}_{j=1}^n)$ be a p-orthonormal basis  for $\mathcal{X}$. Then 
\begin{enumerate}[\upshape(i)]
	\item $\theta_f$ is an invertible isometry.
	\item $\theta_\tau$ is an invertible isometry.
	\item $\theta_\tau\theta_f=I_\mathcal{X}$.
\end{enumerate}
\end{proposition}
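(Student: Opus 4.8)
The plan is to prove (ii) and (iii) directly from Definition \ref{PONB} and then obtain (i) as a formal consequence. For (ii), I would first observe that $\theta_\tau$ is linear by its very definition, and that it is a bijection onto $\mathcal{X}$ because $\{\tau_j\}_{j=1}^n$ is a basis: every $x \in \mathcal{X}$ has a unique expansion $x = \sum_{j=1}^n a_j\tau_j$, which is exactly the statement that $(a_j)_{j=1}^n \mapsto \sum_{j=1}^n a_j\tau_j$ is one-to-one and onto. The isometry property is then immediate from condition (ii) of Definition \ref{PONB}: for every $(a_j)_{j=1}^n \in \ell^p([n])$,
\[
\left\|\theta_\tau\big((a_j)_{j=1}^n\big)\right\| = \left\|\sum_{j=1}^n a_j\tau_j\right\| = \left(\sum_{j=1}^n |a_j|^p\right)^{\frac{1}{p}} = \left\|(a_j)_{j=1}^n\right\|_p .
\]
Hence $\theta_\tau$ is an invertible isometry.

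For (iii), I would use that $\{f_j\}_{j=1}^n$ are the coordinate functionals of $\{\tau_j\}_{j=1}^n$, i.e. $f_k(\tau_j) = \delta_{kj}$. Applying $f_k$ to the expansion $x = \sum_{j=1}^n a_j\tau_j$ yields $f_k(x) = a_k$, so that $x = \sum_{j=1}^n f_j(x)\tau_j$ for every $x \in \mathcal{X}$. Consequently
\[
\theta_\tau\theta_f x = \theta_\tau\big((f_j(x))_{j=1}^n\big) = \sum_{j=1}^n f_j(x)\tau_j = x ,
\]
which proves $\theta_\tau\theta_f = I_\mathcal{X}$.

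Finally, (i) follows by combining the previous two parts: since $\theta_\tau$ is bijective and $\theta_\tau\theta_f = I_\mathcal{X}$, we get $\theta_f = \theta_\tau^{-1}$, and the inverse of an invertible isometry is again an invertible isometry. If one prefers a self-contained argument, one notes instead that $\theta_f$ is linear, that $\theta_f\big(\sum_{j=1}^n a_j\tau_j\big) = (a_j)_{j=1}^n$ by the computation above, hence $\|\theta_f x\|_p = \|x\|$ by Definition \ref{PONB}(ii), and that $\theta_f$ is onto $\ell^p([n])$ because $(a_j)_{j=1}^n = \theta_f\big(\sum_{j=1}^n a_j\tau_j\big)$. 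I do not anticipate any real obstacle here: the statement is a routine unwinding of the definitions, and the only point that must be made explicit is the reproducing identity $x = \sum_{j=1}^n f_j(x)\tau_j$, which is just the standard fact that in finite dimensions a basis and its coordinate functionals recover every vector's expansion.
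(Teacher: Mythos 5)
Your proof is correct, and it follows the only natural route: the paper itself states this proposition without proof, treating it as an immediate consequence of Definition \ref{PONB} and the fact that $\{f_j\}_{j=1}^n$ are the coordinate functionals of the basis $\{\tau_j\}_{j=1}^n$. Your write-up simply makes explicit the routine verification (linearity and bijectivity of $\theta_\tau$, the isometry identity from Definition \ref{PONB}(ii), the reproducing formula $x=\sum_{j=1}^n f_j(x)\tau_j$, and $\theta_f=\theta_\tau^{-1}$) that the paper leaves implicit, so there is nothing to add or correct.
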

It is natural to guess the following version of Definition \ref{DSD} for $ \ell^p([n])$. 
\begin{definition}
	Let $0\leq \varepsilon <1$. A vector $(a_j)_{j=1}^n\in  \ell^p([n])$ is said to be $\varepsilon$-supported on a subset $M\subseteq \{1.\dots, n\}$ w.r.t. p-norm if 
	\begin{align*}
		\left(\sum_{j\in M^c}|a_j|^p\right)^\frac{1}{p}\leq \varepsilon 	\left(\sum_{j=1}^n|a_j|^p\right)^\frac{1}{p}.
	\end{align*}
\end{definition}
With the above definition we have following theorem.
\begin{theorem}\label{ODSA}
(\textbf{Functional Donoho-Stark Approximate Support Uncertainty Principle})
Let $(\{f_j\}_{j=1}^n, \{\tau_j\}_{j=1}^n)$	and $(\{g_k\}_{k=1}^n, \{\omega_k\}_{k=1}^n)$ be two p-orthonormal bases for a finite dimensional Banach space  $\mathcal{X}$.  If 	$ x \in \mathcal{X}\setminus\{0\}$ is such that $\theta_fx$ is $\varepsilon$-supported on  $M\subseteq \{1,\dots, n\}$ w.r.t. p-norm and $\theta_gx$  is $\delta$-supported on  $N\subseteq \{1,\dots, n\}$ w.r.t. p-norm, then 
\begin{align}
		&o(M)^\frac{1}{p}o(N)^\frac{1}{q}\geq  \frac{1}{\displaystyle \max_{1\leq j,k\leq n}|f_j(\omega_k) |}\max \{1-\varepsilon-\delta, 0\},\label{ADS}\\
	&o(M)^\frac{1}{q}o(N)^\frac{1}{p}\geq \frac{1}{\displaystyle \max_{1\leq j,k\leq n}|g_k(\tau_j) |}\max \{1-\varepsilon-\delta,0\}.\label{ADS2}
\end{align}
\end{theorem}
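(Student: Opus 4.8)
The plan is to mimic the classical Donoho--Stark argument, but carry it out on the level of the coordinate sequences $\theta_f x$ and $\theta_g x$ in $\ell^p([n])$, using the change-of-basis operator between the two p-orthonormal bases. Write $u = \theta_f x = (f_j(x))_{j=1}^n$ and $v = \theta_g x = (g_k(x))_{k=1}^n$. Since both $\theta_f$ and $\theta_g$ are invertible isometries, $\|u\|_p = \|v\|_p = \|x\| =: c > 0$, and we may normalise so that $c = 1$. The $\varepsilon$- and $\delta$-support hypotheses say exactly that the ``tails'' of $u$ off $M$ and of $v$ off $N$ have p-norms at most $\varepsilon$ and $\delta$ respectively; equivalently, if $P_M$ denotes the coordinate projection onto the index set $M$ in $\ell^p([n])$, then $\|u - P_M u\|_p \le \varepsilon$ and $\|v - P_N v\|_p \le \delta$.

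The key algebraic object is the operator $T := \theta_g \theta_\tau : \ell^p([n]) \to \ell^p([n])$, which satisfies $T u = T \theta_f x = \theta_g x = v$; it is an invertible isometry because it is a composition of invertible isometries (by the Proposition), and its matrix entries are $\langle \zeta_k, T e_j\rangle = g_k(\tau_j)$. First I would estimate $\|P_N v\|_p$ from below: by the triangle inequality,
\begin{align*}
\|P_N v\|_p \ge \|v\|_p - \|v - P_N v\|_p \ge 1 - \delta,
\end{align*}
and similarly, applying $T$ to $u = P_M u + (u - P_M u)$ and using that $T$ is an isometry,
\begin{align*}
\|P_N v\|_p = \|P_N T u\|_p \le \|P_N T P_M u\|_p + \|P_N T (u - P_M u)\|_p \le \|P_N T P_M u\|_p + \varepsilon.
\end{align*}
Combining the two displays gives $\|P_N T P_M u\|_p \ge 1 - \varepsilon - \delta$, hence $\|P_N T P_M\|_{p \to p} \ge 1 - \varepsilon - \delta$ (and trivially $\ge 0$), so $\|P_N T P_M\|_{p\to p} \ge \max\{1-\varepsilon-\delta, 0\}$.

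The remaining step — and the main technical point — is to bound the operator norm $\|P_N T P_M\|_{p\to p}$ from above by $o(M)^{1/p} o(N)^{1/q} \max_{j,k} |g_k(\tau_j)|$, or by the companion quantity with $f_j(\omega_k)$; matching this against the lower bound and rearranging yields \eqref{ADS} and \eqref{ADS2}. For the upper bound I would work entrywise: $P_N T P_M$ is represented by the submatrix $(g_k(\tau_j))_{k \in N, j \in M}$, all of whose entries are at most $A := \max_{1\le j,k\le n}|g_k(\tau_j)|$ in absolute value. For a vector supported on $M$ with $\|a\|_p \le 1$, each output coordinate $(T a)_k = \sum_{j\in M} g_k(\tau_j) a_j$ is bounded by $A \sum_{j\in M}|a_j| \le A\, o(M)^{1/q}\|a\|_p \le A\, o(M)^{1/q}$ by Hölder; then $\|P_N T a\|_p = \big(\sum_{k\in N}|(Ta)_k|^p\big)^{1/p} \le o(N)^{1/p} A\, o(M)^{1/q}$, which is the desired bound. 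For \eqref{ADS2} one runs the symmetric argument with the roles of the two bases interchanged, using the operator $\theta_f \theta_\omega$ with entries $f_j(\omega_k)$ and noting $\|x\| = \|\theta_f x\|_p = \|\theta_g x\|_p$ again; the Hölder exponent bookkeeping swaps $p$ and $q$ on $o(M)$ and $o(N)$. I expect the only place demanding care is keeping the conjugate-index exponents consistent — checking that the $1/q$ lands on $o(M)$ and the $1/p$ on $o(N)$ in \eqref{ADS}, and the mirror image in \eqref{ADS2} — since everything else is a direct triangle-inequality-plus-Hölder estimate.
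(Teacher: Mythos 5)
Your proposal is correct and follows essentially the same route as the paper: both sandwich the norm of a doubly-projected change-of-basis operator (your $P_N\theta_g\theta_\tau P_M$, with companion $P_M\theta_f\theta_\omega P_N$, exactly the paper's $W$ and $V$) between $\max\{1-\varepsilon-\delta,0\}$ via triangle inequality plus the isometry property, and an entrywise H\"{o}lder bound of the form $\max_{1\leq j,k\leq n}|g_k(\tau_j)|\,o(M)^{\frac{1}{q}}o(N)^{\frac{1}{p}}$. The only slip is cosmetic: the estimate you compute in detail is precisely (\ref{ADS2}), while the symmetric argument with entries $f_j(\omega_k)$ yields (\ref{ADS}), so your references to the two labels are interchanged, but both inequalities are indeed established.
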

\begin{proof}
	For $S\subseteq \{1, \dots, n\}$, define $P_S: \ell^p([n]) \ni (a_j)_{j=1}^n \mapsto \sum_{j\in S} a_j e _j \in \ell^p([n]) $ be the canonical projection onto the coordinates indexed by $S$. Now define $V\coloneqq P_M\theta_f \theta_\omega P_N: \ell^p([n]) \to  \ell^p([n]) $. Then for $z \in \ell^p([n])$,
	 
	\begin{align*}
		&\|Vz\|^p=	\|P_M\theta_f \theta_\omega P_Nz\|^p=\left\| P_M\theta_f \theta_\omega P_N\left(\sum_{k=1}^{n}\zeta_k(z)e_k\right)\right\|^p=\left\| P_M\theta_f \theta_\omega \left(\sum_{k=1}^{n}\zeta_k(z)P_Ne_k\right)\right\|^p\\
		&=\left\| P_M\theta_f \theta_\omega \left(\sum_{k\in N}\zeta_k(z)e_k\right)\right\|^p=\left\| P_M\theta_f  \left(\sum_{k\in N}\zeta_k(z)\theta_\omega e_k\right)\right\|^p=\left\| P_M\theta_f  \left(\sum_{k\in N}\zeta_k(z)\omega_k\right)\right\|^p\\
		&=\left\|\sum_{k\in N}\zeta_k(z)P_M\theta_f \omega_k\right\|^p=\left\|\sum_{k\in N}\zeta_k(z)P_M\left(\sum_{j=1}^{n}f_j(\omega_k)e_j\right)\right\|^p=\left\|\sum_{k\in N}\zeta_k(z)\sum_{j=1}^{n}f_j(\omega_k)P_Me_j\right\|^p\\
		&=\left\|\sum_{k\in N}\zeta_k(z)\sum_{j\in M}f_j(\omega_k)e_j\right\|^p=\left\|\sum_{j\in M}\left(\sum_{k\in N} \zeta_k(z)f_j(\omega_k)\right)e_j\right\|^p=\sum_{j\in M}\left|\sum_{k\in N} \zeta_k(z)f_j(\omega_k)\right|^p\\
		&\leq \sum_{j\in M}\left(\sum_{k\in N}| \zeta_k(z)f_j(\omega_k)|\right)^p\leq \left(\max_{1\leq j,k\leq n}|f_j(\omega_k) |\right)^p\sum_{j\in M}\left(\sum_{k\in N}| \zeta_k(z)|\right)^p\\
		&=\left(\max_{1\leq j,k\leq n}|f_j(\omega_k) |\right)^po(M)\left(\sum_{k\in N}| \zeta_k(z)|\right)^p\leq \left(\max_{1\leq j,k\leq n}|f_j(\omega_k) |\right)^po(M)\left(\sum_{k\in N}| \zeta_k(z)|^p\right)^\frac{p}{p}\left(\sum_{k\in N}1^q\right)^\frac{p}{q}\\
		&=\left(\max_{1\leq j,k\leq n}|f_j(\omega_k) |\right)^po(M)\left(\sum_{k\in N}| \zeta_k(z)|^p\right)^\frac{p}{p}o(N)^\frac{p}{q}\leq \left(\max_{1\leq j,k\leq n}|f_j(\omega_k) |\right)^po(M)\left(\sum_{k=1}^n| \zeta_k(z)|^p\right)^\frac{p}{p}o(N)^\frac{p}{q}\\
		&=\left(\max_{1\leq j,k\leq n}|f_j(\omega_k) |\right)^po(M)\|z\|^po(N)^\frac{p}{q}.
	\end{align*}
Therefore 
\begin{align}\label{P1}
	\|V\|\leq \left(\max_{1\leq j,k\leq n}|f_j(\omega_k) |\right)o(M)^\frac{1}{p}o(N)^\frac{1}{q}.
\end{align}
We now wish to find a lower bound on the operator norm of $V$. For $x\in \mathcal{X}$, we find  
\begin{align*}
	\|\theta_fx-V\theta_gx\|&\leq \|\theta_fx-P_M\theta_fx\|+\|P_M\theta_fx-V\theta_gx\|\leq \varepsilon \|\theta_fx\|+\|P_M\theta_fx-V\theta_gx\|\\
	&=\varepsilon \|\theta_fx\|+\|P_M\theta_fx-P_M\theta_f \theta_\omega P_N\theta_gx\|=\varepsilon \|\theta_fx\|+\|P_M\theta_f(x-\theta_\omega P_N\theta_gx)\|\\
	&\leq \varepsilon \|\theta_fx\|+\|x-\theta_\omega P_N\theta_gx\|= \varepsilon \|\theta_fx\|+\|\theta_\omega \theta_gx-\theta_\omega P_N\theta_gx\|\\
	&=\varepsilon \|\theta_fx\|+\|\theta_\omega (\theta_gx- P_N\theta_gx)\|=\varepsilon \|\theta_fx\|+\|\theta_gx- P_N\theta_gx\|\\
	&\leq \varepsilon \|\theta_fx\|+\delta\|\theta_gx\|=\varepsilon \|x\|+\delta\|x\|=(\varepsilon +\delta)\|x\|.
\end{align*}
Using triangle inequality, we then get 
\begin{align*}
	\|x\|-\|V\theta_gx\|=	\|\theta_fx\|-\|V\theta_gx\|\leq 	\|\theta_fx-V\theta_gx\|\leq 	(\varepsilon +\delta)\|x\|, \quad \forall x \in \mathcal{X}.
\end{align*}
Since $\theta_g$ is an invertible isometry, 
\begin{align*}
	(1-\varepsilon-\delta) \|x\|\leq \|V\theta_gx\|, \quad \forall x \in \mathcal{X} \implies (1-\varepsilon-\delta) \|y\|=(1-\varepsilon-\delta) \|\theta_g^{-1}y\|\leq \|Vy\|, \quad \forall y  \in \ell^p([n]),
\end{align*}
i.e., 
\begin{align}\label{P2}
	\max \{1-\varepsilon-\delta,0\} \leq \|V\|.
\end{align}
Using Inequalities  (\ref{P1}) and (\ref{P2}) we get 
\begin{align*}
	\max \{1-\varepsilon-\delta,0\} \leq 	\left(\max_{1\leq j,k\leq n}|f_j(\omega_k) |\right)o(M)^\frac{1}{p}o(N)^\frac{1}{q}.
\end{align*}
To prove second inequality, define $W\coloneqq P_N\theta_g \theta_\tau P_M: \ell^p([n]) \to  \ell^p([n]) $. Then for $z \in \ell^p([n])$, 
\begin{align*}
	&\|Wz\|^p=	\|P_N\theta_g \theta_\tau P_Mz\|^p=\left\| P_N\theta_g \theta_\tau P_M\left(\sum_{j=1}^{n}\zeta_j(z)e_j\right)\right\|^p=\left\| P_N\theta_g \theta_\tau \left(\sum_{j=1}^{n}\zeta_j(z)P_Me_j\right)\right\|^p\\
	&=\left\| P_N\theta_g \theta_\tau \left(\sum_{j\in M}\zeta_j(z)e_j\right)\right\|^p=\left\| P_N\theta_g  \left(\sum_{j\in M}\zeta_j(z)\theta_\tau e_j\right)\right\|^p=\left\| P_N\theta_g  \left(\sum_{j\in M}\zeta_j(z)\tau_j\right)\right\|^p\\
	&=\left\|   \sum_{j\in M}\zeta_j(z)P_N\theta_g\tau_j\right\|^p=\left\|   \sum_{j\in M}\zeta_j(z)P_N\left(\sum_{k=1}^ng_k(\tau_j)e_k\right)\right\|^p=\left\|   \sum_{j\in M}\zeta_j(z)\sum_{k=1}^ng_k(\tau_j)P_Ne_k\right\|^p\\
	&=\left\|   \sum_{j\in M}\zeta_j(z)\sum_{k\in N}g_k(\tau_j)e_k\right\|^p=\left\|   \sum_{k\in N}\left(\sum_{j\in M}\zeta_j(z)g_k(\tau_j)\right)e_k\right\|^p= \sum_{k\in N}\left|\sum_{j\in M}\zeta_j(z)g_k(\tau_j)\right|^p\\
	&\leq \sum_{k\in N}\left(\sum_{j\in M}|\zeta_j(z)g_k(\tau_j)|\right)^p\leq 	\left(\max_{1\leq j,k\leq n}|g_k(\tau_j) |\right)^p\sum_{k\in N}\left(\sum_{j\in M}|\zeta_j(z)|\right)^p\\
	&=\left(\max_{1\leq j,k\leq n}|g_k(\tau_j) |\right)^po(N)\left(\sum_{j\in M}|\zeta_j(z)|\right)^p\leq \left(\max_{1\leq j,k\leq n}|g_k(\tau_j) |\right)^po(N)\left(\sum_{j\in M}|\zeta_j(z)|^p\right)^\frac{p}{p}\left(\sum_{j\in M}1^q\right)^\frac{p}{q}\\
	&=\left(\max_{1\leq j,k\leq n}|g_k(\tau_j) |\right)^po(N)\left(\sum_{j\in M}|\zeta_j(z)|^p\right)^\frac{p}{p}o(M)^\frac{p}{q}\leq \left(\max_{1\leq j,k\leq n}|g_k(\tau_j) |\right)^po(N)\left(\sum_{j=1}^n|\zeta_j(z)|^p\right)^\frac{p}{p}o(M)^\frac{p}{q}\\
	&=\left(\max_{1\leq j,k\leq n}|g_k(\tau_j) |\right)^po(N)\|z\|^po(M)^\frac{p}{q}.
\end{align*}
Therefore 
\begin{align}\label{P3}
	\|W\|\leq \left(\max_{1\leq j,k\leq n}|g_k(\tau_j) |\right)o(M)^\frac{1}{q}o(N)^\frac{1}{p}.
\end{align}
Now for $x\in \mathcal{X}$, 
\begin{align*}
	\|\theta_gx-W\theta_fx\|&\leq \|\theta_gx-P_N\theta_gx\|+\|P_N\theta_gx-W\theta_fx\|\leq \delta \|\theta_gx\|+\|P_N\theta_gx-W\theta_fx\|\\
&=\delta \|\theta_gx\|+\|P_N\theta_gx-P_N\theta_g \theta_\tau P_M\theta_fx\|=\delta \|\theta_gx\|+\|P_N\theta_g(x-\theta_\tau P_M\theta_fx)\|\\
&\leq \delta \|\theta_gx\|+\|x-\theta_\tau P_M\theta_fx\|= \delta \|\theta_gx\|+\|\theta_\tau \theta_fx-\theta_\tau P_M\theta_fx\|\\
&=\delta \|\theta_gx\|+\|\theta_\tau (\theta_fx- P_M\theta_fx)\|=\delta \|\theta_gx\|+\|\theta_fx- P_M\theta_fx\|\\
&\leq \delta \|\theta_gx\|+\varepsilon\|\theta_fx\|=\delta\|x\|+\varepsilon \|x\|=(\delta+\varepsilon)\|x\|.
\end{align*}
Using triangle inequality and the fact that $\theta_f$ is an invertible isometry we then get 
\begin{align}\label{P4}
	\max \{1-\varepsilon-\delta,0\} \leq \|W\|.	
\end{align}
Using Inequalities  (\ref{P3}) and (\ref{P4}) we get 
\begin{align*}
	\max \{1-\varepsilon-\delta, 0\} \leq 	\left(\max_{1\leq j,k\leq n}|g_k(\tau_j) |\right)o(M)^\frac{1}{q}o(N)^\frac{1}{p}.
\end{align*}
\end{proof}
\begin{corollary}
	Let $\{\tau_j\}_{j=1}^n$ and   $\{\omega_j\}_{j=1}^n$ be two orthonormal bases   for a  finite dimensional Hilbert space $\mathcal{H}$. Set 
	
	\begin{align*}
		\theta_\tau: \mathcal{H} \ni h \mapsto (\langle h, \tau_j\rangle)_{j=1}^n \in \mathbb{C}^n,\quad 
		\theta_\omega: \mathcal{H} \ni h \mapsto (\langle h, \omega_j\rangle)_{j=1}^n \in \mathbb{C}^n.
	\end{align*}
If 	$ h \in \mathcal{H}\setminus\{0\}$ is such that $\theta_\tau h$ is $\varepsilon$-supported on  $M\subseteq \{1,\dots, n\}$ and $\theta_\omega h$  is $\delta$-supported on  $N\subseteq \{1,\dots, n\}$, then 
	\begin{align*}
		o(M)o(N)\geq \frac{1}{\displaystyle\max_{1\leq j,k\leq n}|\langle \tau_j, \omega_k \rangle |^2} (1-\varepsilon-\delta)^2.
	\end{align*}
	 In particular, 	Theorem \ref{FDSA} follows from Theorem \ref{ODSA}.
\end{corollary}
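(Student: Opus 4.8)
The plan is to recognize the Hilbert-space setting of the corollary as a special instance of Theorem \ref{ODSA} with $p=q=2$, and then square. First I would verify that an orthonormal basis $\{\tau_j\}_{j=1}^n$ of $\mathcal{H}$, together with its coordinate functionals $f_j\coloneqq\langle\cdot,\tau_j\rangle$, is a $2$-orthonormal basis in the sense of Definition \ref{PONB}: indeed $\|f_j\|=\|\tau_j\|=1$ for all $j$, and the Parseval identity gives $\|\sum_j a_j\tau_j\|=(\sum_j|a_j|^2)^{1/2}$, which is precisely condition (ii) with $p=2$. Hence $\theta_f=\theta_\tau$ and $\theta_g=\theta_\omega$ in the notation of the corollary, and $q=2$. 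Moreover, for $p=2$ the notion ``$\varepsilon$-supported w.r.t. $p$-norm'' coincides verbatim with Definition \ref{DSD}, so the hypotheses of the corollary are exactly the hypotheses of Theorem \ref{ODSA} applied to these two bases.

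Next I would record the elementary identities $f_j(\omega_k)=\langle\omega_k,\tau_j\rangle$ and $g_k(\tau_j)=\langle\tau_j,\omega_k\rangle$, whence $|f_j(\omega_k)|=|g_k(\tau_j)|=|\langle\tau_j,\omega_k\rangle|$ for all $j,k$; in particular the two constants on the right-hand sides of \eqref{ADS} and \eqref{ADS2} are both equal to $(\max_{1\le j,k\le n}|\langle\tau_j,\omega_k\rangle|)^{-1}$. Substituting $p=q=2$ into \eqref{ADS} gives $o(M)^{1/2}o(N)^{1/2}\ge(\max_{j,k}|\langle\tau_j,\omega_k\rangle|)^{-1}\max\{1-\varepsilon-\delta,0\}$; squaring both sides and using $(\max\{1-\varepsilon-\delta,0\})^2=(1-\varepsilon-\delta)^2$ when $\varepsilon+\delta\le 1$ (the case $\varepsilon+\delta>1$ being trivial) yields the displayed inequality. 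Equivalently, one may multiply \eqref{ADS} by \eqref{ADS2}, which is legitimate since $p=q$ makes the exponents of $o(M)$ and $o(N)$ combine correctly.

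Finally, for the ``in particular'' claim I would specialize $\mathcal{H}=\ell^2([d])$ over $\mathbb{C}$, take $\{\tau_j\}_{j=1}^d$ to be the canonical basis $\{e_j\}_{j=1}^d$ and $\{\omega_k\}_{k=1}^d$ to be the discrete Fourier basis $\omega_k=\tfrac{1}{\sqrt d}\big(e^{-2\pi i(k-1)(l-1)/d}\big)_{l=1}^d$. Then $\theta_\tau h=h$ and $\theta_\omega h$ is the unitary discrete Fourier transform of $h$, which differs from the Fourier transform $\widehat{h}$ of Definition \ref{DSD} only by the scalar $1/\sqrt d$; since the notion of $\delta$-support is invariant under nonzero scaling, $\theta_\omega h$ is $\delta$-supported on $N$ if and only if $\widehat{h}$ is. Every entry of the Fourier matrix has modulus $1/\sqrt d$, so $\max_{j,k}|\langle\tau_j,\omega_k\rangle|^2=1/d$, and the corollary collapses to $o(M)o(N)\ge d(1-\varepsilon-\delta)^2$, which is Theorem \ref{FDSA}. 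I expect no genuine obstacle here: the only points requiring care are checking that an orthonormal basis really meets Definition \ref{PONB} and the bookkeeping that the non-unitary Fourier transform of Definition \ref{DSD} has the same approximate support as its unitary normalization.
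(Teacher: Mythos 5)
Your proposal is correct and follows essentially the same route as the paper's own proof: identify the orthonormal bases with their coordinate functionals as $2$-orthonormal bases, apply Theorem \ref{ODSA} with $p=q=2$ and $|f_j(\omega_k)|=|\langle\omega_k,\tau_j\rangle|$, square, and then specialize to the standard and Fourier bases of $\mathbb{C}^d$ to recover Theorem \ref{FDSA}. You are in fact somewhat more careful than the paper (checking Definition \ref{PONB} via Parseval and handling the $1/\sqrt{d}$ normalization of the discrete Fourier transform, noting scale-invariance of approximate support), the only quibble being that the regime $\varepsilon+\delta>1$ is not ``trivial'' for the literal statement with $(1-\varepsilon-\delta)^2$ but is equally untreated in the paper.
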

\begin{proof}
	Define 
	\begin{align*}
		f_j:\mathcal{H} \ni h \mapsto \langle h, \tau_j \rangle \in \mathbb{K}; \quad g_j:\mathcal{H} \ni h \mapsto \langle h, \omega_j \rangle \in \mathbb{K}, \quad \forall 1\leq j\leq n.
	\end{align*}
	Then $p=q=2$ and $	|f_j(\omega_k)|=|\langle \omega_k, \tau_j \rangle | $  for all $1\leq j, k \leq n.$	Theorem  \ref{FDSA} follows by taking $\{\tau_j\}_{j=1}^n$ as the standard basis and $\{\omega_j\}_{j=1}^n$ as the Fourier basis for $\mathbb{C}^n$. 
\end{proof}
\begin{corollary}
Let $(\{f_j\}_{j=1}^n, \{\tau_j\}_{j=1}^n)$	and $(\{g_k\}_{k=1}^n, \{\omega_k\}_{k=1}^n)$ be two p-orthonormal bases for a finite dimensional Banach space  $\mathcal{X}$.  Let 	$ x \in \mathcal{X}\setminus\{0\}$ is such that $\theta_fx$ is $\varepsilon$-supported on  $M\subseteq \{1,\dots, n\}$ w.r.t. p-norm and $\theta_gx$  is $\delta$-supported on  $N\subseteq \{1,\dots, n\}$ w.r.t. p-norm. If $\varepsilon+\delta \leq 1$, then 
\begin{align*}
	&o(M)^\frac{1}{p}o(N)^\frac{1}{q}\geq  \frac{1}{\displaystyle \max_{1\leq j,k\leq n}|f_j(\omega_k) |}(1-\varepsilon-\delta),\\
	&o(M)^\frac{1}{q}o(N)^\frac{1}{p}\geq \frac{1}{\displaystyle \max_{1\leq j,k\leq n}|g_k(\tau_j) |}(1-\varepsilon-\delta).
\end{align*}	
\end{corollary}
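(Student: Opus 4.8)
The plan is to obtain this corollary directly from Theorem \ref{ODSA}, with no new work beyond bookkeeping. The hypotheses imposed on the corollary — that $(\{f_j\}_{j=1}^n, \{\tau_j\}_{j=1}^n)$ and $(\{g_k\}_{k=1}^n, \{\omega_k\}_{k=1}^n)$ are p-orthonormal bases for $\mathcal{X}$, that $x \in \mathcal{X}\setminus\{0\}$, that $\theta_f x$ is $\varepsilon$-supported on $M$ w.r.t. the p-norm, and that $\theta_g x$ is $\delta$-supported on $N$ w.r.t. the p-norm — coincide exactly with the hypotheses of Theorem \ref{ODSA}. Consequently Inequalities (\ref{ADS}) and (\ref{ADS2}) are available verbatim.

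The only additional ingredient is the extra assumption $\varepsilon + \delta \leq 1$, which I would use solely to remove the truncation in the $\max$. Indeed, $\varepsilon + \delta \leq 1$ is equivalent to $1 - \varepsilon - \delta \geq 0$, hence $\max\{1-\varepsilon-\delta,0\} = 1-\varepsilon-\delta$. Substituting this equality into the right-hand sides of (\ref{ADS}) and (\ref{ADS2}) turns them into
\[
o(M)^{\frac1p} o(N)^{\frac1q} \geq \frac{1-\varepsilon-\delta}{\displaystyle\max_{1\leq j,k\leq n}|f_j(\omega_k)|}, \qquad o(M)^{\frac1q} o(N)^{\frac1p} \geq \frac{1-\varepsilon-\delta}{\displaystyle\max_{1\leq j,k\leq n}|g_k(\tau_j)|},
\]
which are precisely the two displayed inequalities of the corollary.

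I do not expect any genuine obstacle here: the entire content is the observation that the $\max\{\,\cdot\,,0\}$ appearing in Theorem \ref{ODSA} is present only to accommodate the degenerate regime $\varepsilon + \delta > 1$ — where the inequalities carry no information anyway, since $o(M), o(N) \geq 1$ while the nominal right-hand side is nonpositive — and that restricting to $\varepsilon + \delta \leq 1$, the informative regime, makes the truncation inert. If desired, one could append a sentence recording this interpretation so that the corollary reads as the clean form of the theorem.
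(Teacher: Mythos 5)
Your proposal is correct and matches the paper's intent exactly: the corollary is stated there without proof as an immediate consequence of Theorem \ref{ODSA}, since $\varepsilon+\delta\leq 1$ gives $\max\{1-\varepsilon-\delta,0\}=1-\varepsilon-\delta$. Nothing further is needed.
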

\begin{corollary}\label{SC}
Let $(\{f_j\}_{j=1}^n, \{\tau_j\}_{j=1}^n)$	and $(\{g_k\}_{k=1}^n, \{\omega_k\}_{k=1}^n)$ be two p-orthonormal bases for a finite dimensional Banach space  $\mathcal{X}$.  If 	$ x \in \mathcal{X}\setminus\{0\}$ is such that $\theta_fx$ is $0$-supported on  $M\subseteq \{1,\dots, n\}$ w.r.t. p-norm and $\theta_gx$  is $0$-supported  on  $N\subseteq \{1,\dots, n\}$  w.r.t. p-norm (saying differently, $\theta_fx$ is supported on $M$ and $\theta_gx$ is supported on $N$), then 
\begin{align*}
	o(M)^\frac{1}{p}o(N)^\frac{1}{q}\geq  \frac{1}{\displaystyle \max_{1\leq j,k\leq n}|f_j(\omega_k) |},\quad o(M)^\frac{1}{q}o(N)^\frac{1}{p}\geq \frac{1}{\displaystyle \max_{1\leq j,k\leq n}|g_k(\tau_j) |}.
\end{align*}	
\end{corollary}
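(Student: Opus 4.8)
The plan is to obtain Corollary \ref{SC} as the $\varepsilon=\delta=0$ specialization of Theorem \ref{ODSA}. First I would note that the hypothesis ``$\theta_f x$ is $0$-supported on $M$ w.r.t.\ p-norm'' is by definition the assertion that $\theta_f x$ is $\varepsilon$-supported on $M$ with the particular value $\varepsilon=0$; explicitly it reads $\left(\sum_{j\in M^c}|f_j(x)|^p\right)^{1/p}\leq 0$, which forces $f_j(x)=0$ for every $j\in M^c$, so $\theta_f x$ is genuinely supported on $M$. In the same way $\theta_g x$ is $\delta$-supported on $N$ with $\delta=0$. Thus the hypotheses of Theorem \ref{ODSA} are met with $\varepsilon=\delta=0$.

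Next I would simply feed these values into Inequalities (\ref{ADS}) and (\ref{ADS2}). Since $1-\varepsilon-\delta=1>0$, we have $\max\{1-\varepsilon-\delta,0\}=1$, and the two inequalities collapse to
\[
o(M)^{\frac1p}o(N)^{\frac1q}\geq \frac{1}{\displaystyle\max_{1\leq j,k\leq n}|f_j(\omega_k)|},\qquad o(M)^{\frac1q}o(N)^{\frac1p}\geq \frac{1}{\displaystyle\max_{1\leq j,k\leq n}|g_k(\tau_j)|},
\]
which is exactly the claim. Since this is a one-line specialization, there is essentially no obstacle; the only point worth checking is the bookkeeping that $0$-support really is the $\varepsilon=0$ instance of $\varepsilon$-support and that the truncation $\max\{\cdot,0\}$ evaluates to $1$.

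For completeness one could instead re-run the proof of Theorem \ref{ODSA} in this special case, which is even cleaner: with $\varepsilon=\delta=0$ the vector $y\coloneqq\theta_g x$ is nonzero (as $x\neq 0$ and $\theta_g$ is injective) and satisfies $P_N\theta_g x=\theta_g x$ and $P_M\theta_f x=\theta_f x$, so that $Vy=P_M\theta_f\theta_\omega P_N\theta_g x=P_M\theta_f\theta_\omega\theta_g x=P_M\theta_f x=\theta_f x$, whence $\|Vy\|=\|\theta_f x\|=\|x\|=\|y\|$ and therefore $\|V\|\geq 1$. Combining with the upper bound (\ref{P1}) (and symmetrically with (\ref{P3}) for $W$) gives the two displayed inequalities directly. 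I would present the short specialization argument as the proof, mentioning the direct route only as a remark.
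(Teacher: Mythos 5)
Your proposal is correct and matches the paper's (implicit) argument: Corollary \ref{SC} is exactly the $\varepsilon=\delta=0$ specialization of Theorem \ref{ODSA}, with $\max\{1-\varepsilon-\delta,0\}=1$. The alternative direct verification you sketch is also sound, but the one-line specialization is all that is needed.
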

	Corollary \ref{SC} is not the Theorem 2.3  in \cite{KRISHNA3} (it is a particular case) because Theorem 2.3 in \cite{KRISHNA3} is derived for p-Schauder frames which is general than p-orthonormal bases.
Theorem  \ref{ODSA}  promotes  the following question.
\begin{question}
	Given $p$ and a Banach space $\mathcal{X}$ of dimension $n$, for which pairs of p-orthonormal bases $(\{f_j\}_{j=1}^n, \{\tau_j\}_{j=1}^n)$, $(\{g_k\}_{k=1}^n, \{\omega_k\}_{k=1}^n)$ for $\mathcal{X}$, subsets $M,N$ and $\varepsilon, \delta$, we have equality in Inequalities (\ref{ADS}) and (\ref{ADS2})?
\end{question}
Observe that  we used $1<p<\infty$ in the proof of  Theorem  \ref{ODSA}. Therefore we have the following problem.
\begin{question}
	Whether there are Functional Donoho-Stark Approximate Support  Uncertainty Principle (versions of Theorem  \ref{ODSA}) for 1-orthonormal bases and $\infty$-orthonormal bases?
\end{question}
Keeping $\ell^p$-spaces for $0<p<1$ as a model space equipped with 
\begin{align*}
\|(a_j)_{j=1}^n\|_p\coloneqq \sum_{j=1}^{n}|a_j|^p, \quad \forall (a_j)_{j=1}^n \in \mathbb{K}^n,
\end{align*}
we  set  following definitions.
\begin{definition}
Let $\mathcal{X}$  be a  vector   space over $\mathbb{K}$. We say that 	$\mathcal{X}$ is a \textbf{disc-Banach space} if there exists a map called as \textbf{disc-norm} $\|\cdot\|:\mathcal{X} \to [0, \infty)$ satisfying the following conditions.
\begin{enumerate}[\upshape(i)]
	\item If $x \in \mathcal{X} $ is such that $\|x\|=0$, then $x=0$.
	\item $\|x+y\|\leq \|x\|+\|y\|$ for all $x, y  \in \mathcal{X}$. 
	\item $\|\lambda x\|\leq |\lambda|\|x\|$ for all $x  \in \mathcal{X}$ and for all $\lambda\in\mathbb{K}$ with $|\lambda|\geq 1$.
	\item $\|\lambda x\|\geq |\lambda|\|x\|$ for all $x  \in \mathcal{X}$ and for all $\lambda\in\mathbb{K}$ with $|\lambda|\leq 1$.
	\item $\mathcal{X}$ is complete w.r.t. the metric $d(x, y)\coloneqq \|x-y\|$ for all $x, y  \in \mathcal{X}$. 
\end{enumerate}
\end{definition}
\begin{definition}
Let $\mathcal{X}$  be a  finite dimensional disc-Banach space   over $\mathbb{K}$.   Let $\{\tau_j\}_{j=1}^n$ be a basis for   $\mathcal{X}$ and 	let $\{f_j\}_{j=1}^n$ be the coordinate functionals associated with $\{\tau_j\}_{j=1}^n$. The pair $(\{f_j\}_{j=1}^n, \{\tau_j\}_{j=1}^n)$ is said to be a \textbf{p-orthonormal basis} ($1<p <\infty$) for $\mathcal{X}$ if  the following conditions hold.
\begin{enumerate}[\upshape(i)]
	\item $\|f_j\|=\|\tau_j\|=1$ for all $1\leq j\leq n$.
	\item For every $(a_j)_{j=1}^n \in \mathbb{K}^n$, 
	\begin{align*}
		\left\|\sum_{j=1}^na_j\tau_j \right\|=\sum_{j=1}^n|a_j|^p.
	\end{align*}
\end{enumerate}	
\end{definition}
 Then we also have the following question.
\begin{question}
	Whether there are versions of Theorem  \ref{ODSA} for p-orthonormal bases $0<p<1$?
\end{question}
We wish to mention that in   \cite{KRISHNA3} the functional uncertainty principle was derived for p-Schauder frames which is general than p-orthonormal bases. Thus it is desirable to derive  Theorem   \ref{ODSA} or a variation of it  for p-Schauder frames, which we can't.

We end by asking the following curious question whose motivation is the recently proved Balian-Low theorem (which is also an uncertainty principle) for Gabor systems in finite dimensional Hilbert spaces \cite{NITZANOLSEN2, NITZANOLSEN, LAMMERSSTAMPE}.
\begin{question}\label{FBL}
	\textbf{Whether there is a Functional Balian-Low Theorem (which we like to call Functional Balian-Low-Lammers-Stampe-Nitzan-Olsen Theorem) for Gabor-Schauder  systems in finite dimensional Banach spaces (Gabor-Schauder system is as defined in \cite{KRISHNA4})?}
\end{question}

 \bibliographystyle{plain}
 \bibliography{reference.bib}

\end{document}